\documentclass[11pt,a4paper]{amsart}

\usepackage[margin=1in]{geometry}
\usepackage[utf8]{inputenc}
\usepackage[T1]{fontenc}
\usepackage{amsmath,amsthm,amssymb,amsfonts}
\usepackage{mathtools,mathrsfs}
\usepackage{enumitem}
\usepackage{booktabs}
\usepackage[colorlinks=true,linkcolor=blue,citecolor=red,urlcolor=magenta]{hyperref}
\usepackage[capitalise,nameinlink,noabbrev]{cleveref}
\usepackage{graphicx}
\graphicspath{{figs/}}

\newcommand{\n}{\nabla}
\renewcommand{\S}{\Sigma}

\usepackage{aliascnt} 
\theoremstyle{plain} 
\newtheorem{theorem}{Theorem}[section] 
\newaliascnt{proposition}{theorem} \newtheorem{proposition}[proposition]{Proposition} \aliascntresetthe{proposition} 
\newaliascnt{lemma}{theorem} 
\newtheorem{lemma}[lemma]{Lemma} \aliascntresetthe{lemma} 
\newaliascnt{corollary}{theorem} \newtheorem{corollary}[corollary]{Corollary} \aliascntresetthe{corollary} \theoremstyle{definition} 
\newaliascnt{definition}{theorem}  \aliascntresetthe{definition} \newaliascnt{example}{theorem}  \aliascntresetthe{example}
\theoremstyle{remark} 
\newaliascnt{remark}{theorem} 
\newtheorem{remark}[remark]{Remark} \aliascntresetthe{remark} 

\crefname{theorem}{teorema}{teoremas} \Crefname{theorem}{Teorema}{Teoremas} \crefname{proposition}{proposição}{proposições} \Crefname{proposition}{Proposição}{Proposições} \crefname{lemma}{lema}{lemas} 
\Crefname{lemma}{Lema}{Lemas} 
\crefname{corollary}{corolário}{corolários} \Crefname{corollary}{Corolário}{Corolários} \crefname{definition}{definição}{definições} \Crefname{definition}{Definição}{Definições} \crefname{remark}{observação}{observações} \Crefname{remark}{Observação}{Observações} \crefname{example}{exemplo}{exemplos} \Crefname{example}{Exemplo}{Exemplos} \crefname{section}{seção}{seções} \Crefname{section}{Seção}{Seções} \crefname{subsection}{subseção}{subseções} \Crefname{subsection}{Subseção}{Subseções} \crefname{equation}{equação}{equações} \Crefname{equation}{Equação}{Equações} \crefname{table}{tabela}{tabelas} 
\Crefname{table}{Tabela}{Tabelas} \crefname{figure}{figura}{figuras} \Crefname{figure}{Figura}{Figuras}

\newcommand{\R}{\mathbb{R}}
\newcommand{\Hyp}{\mathbb{H}}
\newcommand{\C}{\mathsf{C}}
\newcommand{\F}{\mathsf{F}}
\newcommand{\J}{\mathcal{J}}
\DeclareMathOperator{\sech}{sech}

\DeclareMathOperator{\csch}{csch}

\title[Horocyclic CMC  hypersurfaces in $\Hyp^2\times\Hyp^2$]%
{Constant mean curvature hypersurfaces in $\Hyp^2\times\Hyp^2$ with double horocyclic symmetry}

\author{Julio Cesar Mohnsam}
\address{Department of Mathematics, UFRGS, Porto Alegre, Brazil}
\email{julio.mohnsam@ufrgs.br}
\date{\today}

\keywords{Product spaces, CMC hypersurfaces, horocyclic symmetry, first integrals, homogeneous geometries.}
\subjclass[2020]{Primary 53C42; Secondary 53C40, 53A10, 34A34}

\begin{document}

\begin{abstract}
We study constant mean curvature hypersurfaces in $\Hyp^2\times\Hyp^2$
invariant under a double horocyclic action. We show that the CMC condition
reduces to a single autonomous ordinary differential equation for an angular
function. From this reduction, we obtain three distinct regimes and solve the 
ODE explicitly in each case, obtaining an
existence and uniqueness result for double horocyclic CMC hypersurfaces in $\Hyp^2\times\Hyp^2$.
Finally, we classify the equilibrium solutions and identify the corresponding
homogeneous models: $\Hyp^3$, $\Hyp^2\times\R$, $\mathrm{Sol}_3$, and
left-invariant metrics on semidirect product Lie groups.
\end{abstract}

\maketitle
%\tableofcontents

\section{Introduction}
\label{sec:introducao}

The study of submanifolds in product spaces of dimension four has received increasing attention in differential geometry, especially in the products $\mathbb S^2\times\mathbb S^2$ and $\Hyp^2\times\Hyp^2$. These spaces present a geometry rigid enough to allow classification results, but also rich enough to produce phenomena that do not appear in space forms. In this context, the work of Torralbo and Urbano on surfaces with parallel mean curvature vector stands out \cite{TorralboUrbano2012}; see also the recent work of Stas and Van der Veken on spheres with parallel mean curvature in product spaces \cite{StasVanDerVeken2025}. Related recent developments include studies on Hopf hypersurfaces, curvature-adapted hypersurfaces, and parallel normal Jacobi operators in products of surfaces \cite{HuLuYaoZhang2023,HuLuYaoZhangIJM2024,HuLu2024}.

In the specific case of $\Hyp^2\times\Hyp^2$, several special classes of
submanifolds have been investigated. Gao, Van der Veken, Wijffels, and Xu
studied Lagrangian surfaces in this product \cite{GaoVdVWijffelsXu2021},
while Gao, Ma, and Yao analyzed hypersurfaces of
$\Hyp^2\times\Hyp^2$ under natural geometric hypotheses
\cite{GaoMaYao2022}. More recently, Li, Vrancken, Wang, and Yao studied
hypersurfaces with constant sectional curvature
\cite{LiVranckenWangYao2026}. These works show that the geometry of
$\Hyp^2\times\Hyp^2$ is strongly influenced by the interaction between the
two hyperbolic factors.

In this paper, we adopt a complementary approach: instead of directly
imposing global conditions on sectional curvature or curvature adaptation,
we consider constant mean curvature hypersurfaces invariant under a double
horocyclic action. This symmetry reduces the problem of existence
of CMC hypersurfaces to finding a suitable
generating curve in the orbit space. We show that the CMC condition
translates into a single autonomous ordinary differential equation for an
angular function associated with the generating curve.

From this reduction, we obtain two first integrals that allow us to
explicitly reconstruct the generating curve. Next, we solve the angular
equation by a Riccati reduction and describe the trichotomy into the
subcritical, critical, and supercritical regimes. Corollary~\ref{corMainthm}
summarizes the main existence result for double horocyclic CMC hypersurfaces.
Finally, we analyze the equilibrium solutions of the angular equation and
identify the corresponding homogeneous models: $\Hyp^3$,
$\Hyp^2\times\R$, $\mathrm{Sol}_3$, and left-invariant metrics on
semidirect product Lie groups.

\section{Reduction by horocyclic symmetry}
\label{sec:setup}

\subsection{The ambient space}

We use the upper half-plane model 
\[
\Hyp^2=\{(x,y)\in\R^2:y>0\},
\qquad 
ds_{\Hyp^2}^2=\frac{dx^2+dy^2}{y^2},
\]
for the hyperbolic space of constant curvature $-1$. Then, the ambient space
we work in is the Riemannian product
\[
M=\Hyp^2\times\Hyp^2=\{(x_1,y_1,x_2,y_2):y_1,y_2>0\},
\]
\[
ds_M^2=\frac{dx_1^2+dy_1^2}{y_1^2}+\frac{dx_2^2+dy_2^2}{y_2^2}.
\]
We adopt the global orthonormal frame
\begin{equation}\label{eq:frame}
E_1=y_1\partial_{x_1},\qquad E_2=y_1\partial_{y_1},
\qquad
E_3=y_2\partial_{x_2},\qquad E_4=y_2\partial_{y_2}.
\end{equation}
Since the metric is a product, the Levi-Civita connection acts independently 
on each factor. With respect to the orthonormal frame \eqref{eq:frame}, we have
\begin{equation}\label{eq:ambient-connection}
\begin{array}{c@{\qquad\qquad}c}
\begin{aligned}
\nabla_{E_1}E_1&=E_2, &
\nabla_{E_1}E_2&=-E_1,\\
\end{aligned}
&
\begin{aligned}
\nabla_{E_3}E_3&=E_4, &
\nabla_{E_3}E_4&=-E_3,\\
\end{aligned}
\end{array}
\end{equation}
and $\nabla_{E_i}E_j=0 $, otherwise.
\subsection{Double horocyclic hypersurfaces}

In the upper half-plane model, the horocycles of $\Hyp^2$
centered at the ideal point $\infty$ are the horizontal
lines $y=\mathrm{const}$. They are preserved by the
parabolic group of horizontal translations
\begin{align*}
(x,y)\longmapsto (x+c,y),
\end{align*}
whose infinitesimal generator is the Killing vector field
\begin{align*}
K=\partial_x.
\end{align*}
In the product $M=\Hyp^2\times\Hyp^2$, we consider the two
parabolic vector fields
\begin{align*}
K_1=\partial_{x_1},\qquad K_2=\partial_{x_2},
\end{align*}
acting independently on the horizontal coordinates of the
two factors. The corresponding action of $\R^2$ is given
by
\begin{align}\label{eqaction}
\Phi_{(u,v)}(x_1,y_1,x_2,y_2) = (x_1+u,y_1,x_2+v,y_2).
\end{align}

Let $\Sigma$ be a hypersurface of $\Hyp^2\times\Hyp^2$, invariant under this double horocyclic action.
Then, $\Sigma$ is obtained by
applying the horocyclic orbits to a generating curve
transversal to the orbits. Up to a horizontal translation
in each factor, we can choose this curve in the slice
$\{x_1=x_2=0\}$, writing
\begin{align*}
\gamma(t)=(0,y(t),0,w(t)),
\qquad y(t)>0,\quad w(t)>0.
\end{align*}
Henceforth, we will omit the zeroes in the above expression
and simply state that a hypersurface $\Sigma$ with {\em generating curve}
$(y(t),w(t))$ is the surface parameterized by
\begin{equation}\label{eq:parametrizacao}
X(u,v,t) = \Phi_{(u,v)}(\gamma(t)) = (u,y(t),v,w(t)),
\qquad (u,v,t)\in\R^2\times I.
\end{equation}
We may choose the parameter $t$ so that the generating curve is
parameterized by arc length in the orbit space. Since in $\{x_1=x_2=0\}$
the induced metric is
\begin{align*}
\frac{dy^2}{y^2}+\frac{dw^2}{w^2},
\end{align*}
this normalization is equivalent to
\begin{equation}\label{eq:arco-log}
\left(\frac{y'}{y}\right)^2+\left(\frac{w'}{w}\right)^2=1.
\end{equation}

By condition \eqref{eq:arco-log}, there exists a function
$\theta=\theta(t)$, unique modulo $2\pi$, such that
\begin{equation}\label{eq:theta-def}
\frac{y'}{y}=\cos\theta,
\qquad
\frac{w'}{w}=\sin\theta,
\end{equation}
and a direct computation using~\eqref{eq:parametrizacao}
and~\eqref{eq:theta-def} shows that the induced metric on $\Sigma$ is
\begin{equation}\label{eq:metrica-induzida}
ds_\Sigma^2=\frac{du^2}{y(t)^2}+\frac{dv^2}{w(t)^2}+dt^2.
\end{equation}

\subsection{Adapted orthonormal frame and principal curvatures}

To describe the second fundamental form of $\Sigma$, 
we adapt the ambient frame to the hypersurface via a rotation by an angle~$\theta$ in the $\{E_2,E_4\}$ plane.
More precisely, if $\theta\colon I \to \R$ is the function defined by~\eqref{eq:theta-def}, then we let
\begin{equation}\label{eq:darboux}
e_1=E_1,
\qquad
e_2=E_3,
\qquad
e_3=\cos\theta\,E_2+\sin\theta\,E_4,
\qquad
N=-\sin\theta\,E_2+\cos\theta\,E_4.
\end{equation}
By~\eqref{eq:theta-def}, $e_3=\frac{\partial X}{\partial t}$, so $\{e_1,e_2,e_3\}$ is an orthonormal frame tangent to~$\Sigma$, thus $N$ is the corresponding unit normal.

\begin{proposition}[Principal curvatures]\label{prop:curvaturas-principais}
With the normal $N$ from~\eqref{eq:darboux}, the adapted frame diagonalizes the shape operator $S=-\n N$, and the principal curvatures of~$\Sigma$ are
\begin{equation}\label{eq:k123}
k_1=-\sin\theta,
\qquad
k_2=\cos\theta,
\qquad
k_3=\theta'.
\end{equation}
\end{proposition}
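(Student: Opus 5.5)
The proof is a direct frame computation: I would evaluate $S e_i=-\nabla_{e_i}N$ for $i=1,2,3$, using only the connection table \eqref{eq:ambient-connection} and the fact that $\theta$ depends only on $t$. The key observation is that along $\Sigma$ the coefficients of $N$ in the ambient frame, $-\sin\theta$ and $\cos\theta$, are functions of $t$ alone. By \eqref{eq:parametrizacao}, $e_1=E_1=y\,\partial_{x_1}$ and $e_2=E_3=w\,\partial_{x_2}$ are tangent to the orbits, i.e.\ proportional to $\partial_u X$ and $\partial_v X$. So these coefficient functions have zero derivative in the $e_1,e_2$ directions. Since $e_3=\partial_t X$, their derivatives along $e_3$ are $-\theta'\cos\theta$ and $-\theta'\sin\theta$. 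Because $\nabla_{e_i}N$ depends only on $N$ along $\Sigma$, no extension of $\theta$ off $\Sigma$ is needed.

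\emph{Direction $e_1$.} By the Leibniz rule, $\nabla_{E_1}N=-\sin\theta\,\nabla_{E_1}E_2+\cos\theta\,\nabla_{E_1}E_4$. The table gives $\nabla_{E_1}E_2=-E_1$, and $\nabla_{E_1}E_4=0$ since the factors are independent. Hence $\nabla_{e_1}N=\sin\theta\, e_1$, so $Se_1=-\sin\theta\,e_1$.

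\emph{Direction $e_2$.} Similarly, $\nabla_{E_3}E_2=0$ and $\nabla_{E_3}E_4=-E_3$. This gives $\nabla_{e_2}N=-\cos\theta\,e_2$, so $Se_2=\cos\theta\,e_2$.

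\emph{Direction $e_3$.} Here the coefficients do vary, and
\[
\nabla_{e_3}N=-\theta'(\cos\theta\,E_2+\sin\theta\,E_4)-\sin\theta\,\nabla_{e_3}E_2+\cos\theta\,\nabla_{e_3}E_4 .
\]
Expanding $e_3=\cos\theta E_2+\sin\theta E_4$, every term $\nabla_{E_a}E_b$ with $a,b\in\{2,4\}$ vanishes by the table. Therefore $\nabla_{e_3}N=-\theta' e_3$, and $Se_3=\theta' e_3$.

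Thus each $e_i$ is an eigenvector of $S$, so the adapted frame diagonalizes the shape operator with eigenvalues $-\sin\theta$, $\cos\theta$, $\theta'$, as claimed in \eqref{eq:k123}.

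There is no real obstacle; the only points needing care are signs. First, the convention $S=-\nabla N$ must be applied consistently. Second, one must justify that $e_1(\theta)=e_2(\theta)=0$ and $e_3(\theta)=\theta'$. This follows from $e_3=\partial_t X$, which is guaranteed by \eqref{eq:theta-def}, together with the invariance of the parametrization \eqref{eq:parametrizacao} in $u$ and $v$. I would state these two points explicitly before running the three computations above.
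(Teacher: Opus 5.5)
Your proposal is correct and follows the paper's proof essentially verbatim. You compute $-\nabla_{e_i}N$ for each frame vector from the connection table \eqref{eq:ambient-connection}, using that the coefficients of $N$ are constant along the orbit directions and differentiate to $-\theta'\cos\theta$ and $-\theta'\sin\theta$ along $e_3=\partial_t X$; your explicit justification of $e_1(\theta)=e_2(\theta)=0$ from the $(u,v)$-invariance of the parametrization spells out what the paper states in one line.
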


\begin{proof}
Since $\theta=\theta(t)$ does not vary in the directions $E_1,E_3$, using~\eqref{eq:ambient-connection} and the orthogonality of the factors we obtain
\begin{align*}
\n_{e_1}N=-\sin\theta\,\n_{E_1}E_2+\cos\theta\,\n_{E_1}E_4
=-\sin\theta\,(-E_1)=\sin\theta\,E_1,
\end{align*}
thus $S(e_1)=-\n_{e_1}N=-\sin\theta\,e_1$. Analogously,
\begin{align*}
\n_{e_2}N=-\sin\theta\,\n_{E_3}E_2+\cos\theta\,\n_{E_3}E_4
=\cos\theta\,(-E_3)=-\cos\theta\,E_3,
\end{align*}
hence $S(e_2)=\cos\theta\,e_2$. Finally, since $e_3=\cos\theta\,E_2+\sin\theta\,E_4 = \frac{\partial X}{\partial_t}$ and the terms $\n_{e_3}E_2,\n_{e_3}E_4$ vanish by~\eqref{eq:ambient-connection},
\begin{align*}
\n_{e_3}N=-\theta'\cos\theta\,E_2-\theta'\sin\theta\,E_4=-\theta'\,e_3,
\end{align*}
so that $S(e_3)=\theta'\,e_3$.
\end{proof}

\section{The CMC equation and first integrals}
\label{sec:integrabilidade}

The constant mean curvature condition can now be written directly
in terms of the angular function $\theta$. Indeed, when oriented with respect to the vector field
$N$ defined in~\eqref{eq:darboux}, Proposition~\ref{prop:curvaturas-principais} gives that
the mean curvature of $\S$ is
\begin{align*}
H_\S=\frac{-\sin\theta+\cos\theta+\theta'}{3},
\end{align*}
so $\S$ has constant mean curvature $H_\S\equiv H$ if and only if $\theta$ satisfies
\begin{equation}\label{eq:edo-cmc}
\boxed{\;
\theta'=\F(\theta):=\sin\theta-\cos\theta+\C,
\qquad
\C=3H.
\;}
\end{equation}
Using the identity
\begin{align*}
\sin\theta-\cos\theta
=
\sqrt2\,\sin\!\left(\theta-\frac\pi4\right),
\end{align*}
we can write
\begin{equation}\label{eq:F-def}
\F(\theta)
=
\sqrt2\,\sin\!\left(\theta-\frac\pi4\right)+\C.
\end{equation}

Thus, the CMC condition reduces the geometry of the family to a single autonomous ODE
for $\theta$. The next step is to show that
the generating curve $(y,w)$ can be directly reconstructed by first integrals.

\begin{theorem}[First integrals and universal reconstruction]
\label{thm:integrais-reconstrucao}
In every interval where $\F(\theta)\neq0$, the quantities
\begin{align}\label{eq:J1J2}
\J_1&=\frac{w}{y}\,e^{\C t-\theta},&
\J_2&=\frac{yw}{|\F(\theta)|}=\frac{yw}{|\theta'|}
\end{align}
are constant along any CMC solution. Furthermore, the first integrals $\J_1$ and $\J_2$ are functionally independent in the state space.
Consequently, if $\theta$ is a solution 
of~\eqref{eq:edo-cmc} with $\F(\theta)\neq0$, then
\begin{equation}\label{eq:yw-univ}
\boxed{\;
\begin{aligned}
y(t)&=\sqrt{\frac AB}\,\sqrt{|\F(\theta(t))|}\,
\exp\!\left[-\frac12\bigl(\theta(t)-\C t\bigr)\right]\\[2pt]
w(t)&=\sqrt{AB}\,\sqrt{|\F(\theta(t))|}\,
\exp\!\left[\frac12\bigl(\theta(t)-\C t\bigr)\right],
\end{aligned}
\;}
\end{equation}
is a solution $(y,w)$ of~\eqref{eq:theta-def} where
$A=\J_2$ and $B=\J_1$.
%, that is,
%\begin{equation}\label{eq:prod-razao}
%yw=A\,|\F(\theta)|,
%\qquad
%\frac wy=B\,e^{\theta-\C t}.
%\end{equation}
\end{theorem}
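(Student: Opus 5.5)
The plan is to differentiate logarithms, using only \eqref{eq:theta-def} and the CMC equation \eqref{eq:edo-cmc}. For $\J_1$ we compute
\[
\frac{d}{dt}\log\J_1=\frac{w'}{w}-\frac{y'}{y}+\C-\theta'=\sin\theta-\cos\theta+\C-\F(\theta)=0 .
\]
For $\J_2$, on an interval where $\F(\theta)\neq0$, the sign of $\F(\theta)$ is constant, so $\log|\F(\theta)|$ is differentiable and
\[
\frac{d}{dt}\log\J_2=\cos\theta+\sin\theta-\frac{\F'(\theta)\,\theta'}{\F(\theta)}=\cos\theta+\sin\theta-\F'(\theta),
\]
because $\theta'=\F(\theta)$. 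Since $\F'(\theta)=\cos\theta+\sin\theta$, this derivative vanishes, and the identity $\J_2=yw/|\theta'|$ is immediate from \eqref{eq:edo-cmc}.

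For the reconstruction, set $A=\J_2$ and $B=\J_1$. This gives
\[
yw=A|\F(\theta)|,\qquad \frac{w}{y}=B\,e^{\theta-\C t}.
\]
Multiplying and dividing these two relations and taking positive square roots ($y,w>0$, $A,B>0$) yields exactly \eqref{eq:yw-univ}.

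For the converse direction we start from the formulas \eqref{eq:yw-univ}, with arbitrary constants $A,B>0$ and $\theta$ solving \eqref{eq:edo-cmc}. Logarithmic differentiation gives
\[
\frac{y'}{y}=\tfrac12\F'(\theta)-\tfrac12(\F(\theta)-\C)=\tfrac12(\cos\theta+\sin\theta)-\tfrac12(\sin\theta-\cos\theta)=\cos\theta,
\]
and similarly $w'/w=\sin\theta$. So \eqref{eq:theta-def} holds. This settles the ``consequently'' part, and it also shows that every solution of \eqref{eq:theta-def} arises this way, by uniqueness of logarithmic primitives.

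For functional independence we view the state space as the coordinates $(t,\theta,y,w)$, or alternatively $(\theta,y,w)$ with the $t$-dependence carried by $e^{\C t}$. We check that $d\J_1\wedge d\J_2\neq0$ on the open set where $\F\neq0$. Working with logarithms,
\[
d\log\J_1=d\log w-d\log y+\C\,dt-d\theta,\qquad d\log\J_2=d\log y+d\log w-\frac{\F'}{\F}\,d\theta .
\]
Their $(d\log y,\,d\log w)$ components are $(-1,1)$ and $(1,1)$, which are linearly independent. Hence the wedge is nonzero everywhere on the domain.

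The main obstacle is only interpretive: pinning down what ``state space'' means so that the independence claim is meaningful. Note that $\J_1$ is time-dependent when $\C\neq0$, so these are first integrals of the nonautonomous system. Once the coordinates are fixed, the determinant check above is immediate; the remaining computations are routine.
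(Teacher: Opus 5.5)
Your proposal is correct and matches the paper's proof essentially line by line. It uses the same logarithmic derivatives of $\J_1$ and $\J_2$, the same independence check via the components $(-1,1)$ and $(1,1)$ in the $(\log y,\log w)$ directions, and the same multiply-and-divide reconstruction; your additional check that \eqref{eq:yw-univ} satisfies \eqref{eq:theta-def} for arbitrary $A,B>0$ is a correct small supplement that the paper leaves implicit.
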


\begin{proof}
Let $\J_1$ and $\J_2$ be defined by~\eqref{eq:J1J2}.
Taking the logarithmic derivative and using~\eqref{eq:theta-def},
\eqref{eq:edo-cmc} and $\F'(\theta)=\cos\theta+\sin\theta$, we obtain
\begin{align*}
(\log\J_1)'
=
\frac{w'}w-\frac{y'}y+\C-\theta'
=
\sin\theta-\cos\theta+\C-\theta'
=
\F(\theta)-\theta'
=
0,
\end{align*}
and
\begin{align*}
(\log\J_2)'
=
\frac{y'}y+\frac{w'}w-\frac{\F'(\theta)\theta'}{\F(\theta)}
=
\cos\theta+\sin\theta-\F'(\theta)
=
0.
\end{align*}
Thus, $\J_1$ and $\J_2$ are constant.

For functional independence, let $\rho=\log y$ and $\sigma=\log w$.
In the state space with coordinates $(\rho,\sigma,\theta,t)$,
\begin{align*}
d\log\J_1
=
-d\rho+d\sigma-d\theta+\C\,dt,
\qquad
d\log\J_2
=
d\rho+d\sigma-\frac{\F'(\theta)}{\F(\theta)}\,d\theta.
\end{align*}
The components in $(d\rho,d\sigma)$ are $(-1,1)$ and $(1,1)$, whose
determinant is $-2\neq0$. Therefore, the differentials are linearly
independent.

Finally, setting $A=\J_2$ and $B=\J_1$, we obtain
\begin{align*}
yw=A|\F(\theta)|,
\qquad
\frac wy=B e^{\theta-\C t}.
\end{align*}
Multiplying and dividing these two identities, the formulas
\eqref{eq:yw-univ} follow.
\end{proof}
\section{The angular equation and the trichotomy}
\label{sec:fase}

By the universal reconstruction, the geometry of the double horocyclic
hypersurface reduces to the autonomous angular equation
\begin{equation}\label{eq:mestra}
\theta'=\F(\theta)
=
\sqrt2\,\sin\!\left(\theta-\frac\pi4\right)+\C.
\end{equation}
In this section, we directly analyze the phase line of~\eqref{eq:mestra}. The
position of the zeros of $\F$ determines the equilibria of the angular dynamics
and, consequently, separates the solutions into three qualitatively distinct
regimes, which will be analyzed in the next section.

\begin{remark}[Symmetry in $\C$]\label{rmk:simetria}
The involution
\begin{align*}
(\theta,\C)\longmapsto \left(\frac\pi2-\theta,-\C\right)
\end{align*}
preserves equation~\eqref{eq:mestra}. Furthermore, it swaps the equations
\begin{align*}
\frac{y'}y=\cos\theta,
\qquad
\frac{w'}w=\sin\theta
\end{align*}
with each other, that is, it geometrically corresponds to swapping the two
factors $y\leftrightarrow w$. Therefore, up to this symmetry, it suffices to
analyze the case $\C\ge0$.
\end{remark}

\subsection{The phase line}
\label{subsec:tricotomia} 

The equilibria of~\eqref{eq:mestra} are the zeros of 
$\F$, that is, the constant
values $\theta_\C$ that satisfy 
\begin{equation}\label{eq:zeros-F}
\sin\!\left(\theta_\C-\frac\pi4\right)
=
-\frac{\C}{\sqrt2}.
\end{equation}
It follows immediately that equilibria exist if and only if
$|\C|\le\sqrt2$. Thus, the critical value $\sqrt2$ separates three
possibilities, governed by the position of $\C$ 
relative to $\sqrt2$ (see Figure~\ref{fig:tricotomia}):

\addvspace{\topsep}

\noindent {\bf Trichotomy.}
The dynamics of $\theta'=\F(\theta)$ is 
\begin{itemize}
\item {\bf Subcritical} ($0\le\C<\sqrt2$): two simple equilibria modulo $2\pi$, a 
repeller and an attractor.
\item {\bf Critical} ($\C=\sqrt2$): a 
single degenerate equilibrium point modulo $2\pi$, corresponding to a saddle-node 
bifurcation.
\item {\bf Supercritical} ($\C>\sqrt2$): there are no equilibria; 
the angular velocity is always positive, and $\theta$ traverses the 
circle.
\end{itemize}

\addvspace{\topsep}

\begin{figure}[ht]
\centering
\includegraphics[width=0.86\textwidth]{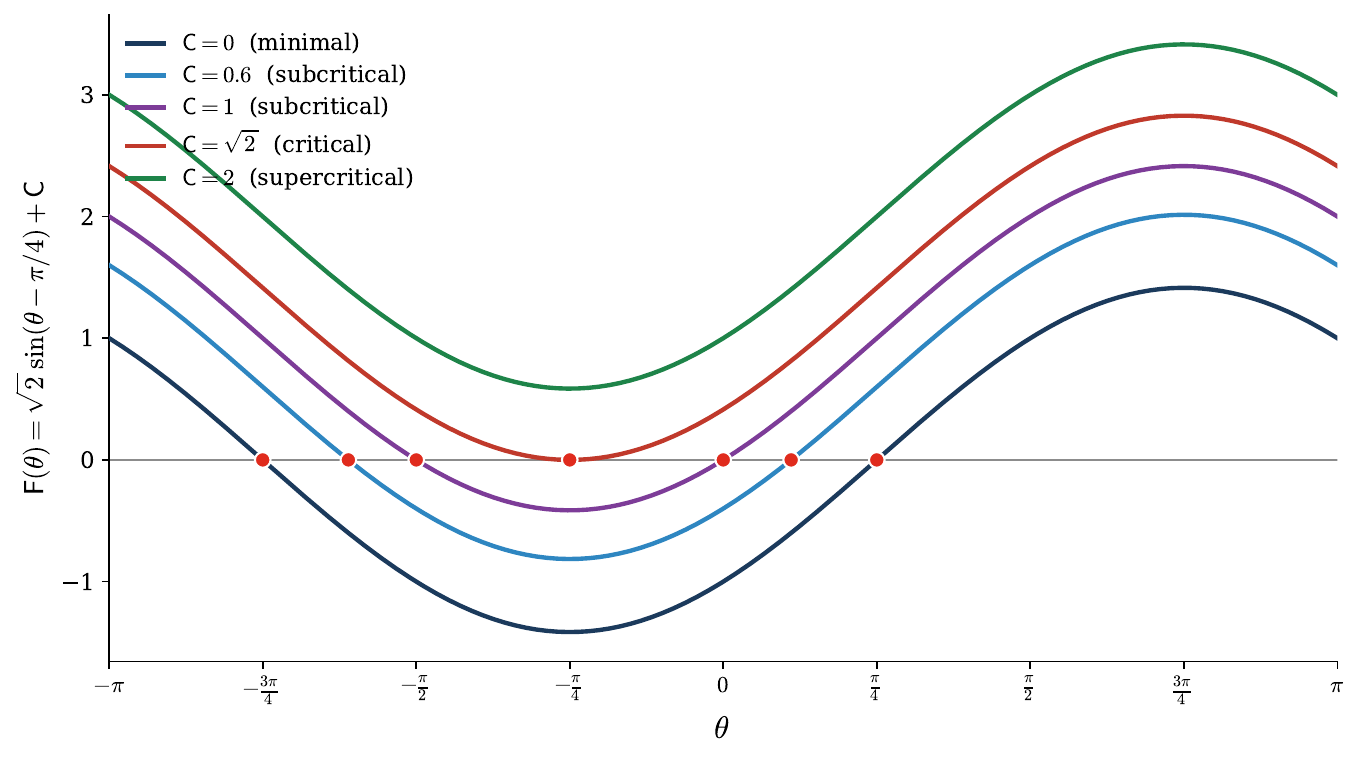}
\caption{The graph of $\F(\theta)=\sqrt2\sin(\theta-\pi/4)+\C$ in 
representative regimes. When $0\leq \C <\sqrt{2}$, there are two equilibria points (subcritical regime); when $\C = \sqrt{2}$ (critical regime) there is only one equilibrium point; when $\C>\sqrt{2}$ (supercritical regime), there are no equilibria points.}
\label{fig:tricotomia}
\end{figure}

In the subcritical regime, $0\le\C<\sqrt2$, the function $\F$ has 
two simple zeros $\theta_\C^-,\,\theta_\C^+$ modulo $2\pi$. 
For convenience, we choose
\begin{align*}
-3\pi/4\leq \theta_\C^-<\theta_\C^+\leq \pi/4,
\end{align*}
so $\theta_\C^-$ and $\theta_\C^+$ relate via
\begin{align}\label{eqrelatethc}
\theta_\C^++\theta_\C^- = -\pi/2.
\end{align}
Thus, after a translation of the initial condition, 
any non-constant solution of $\theta'=\F(\theta)$ must 
satisfy either $\theta_\C^- < \theta(t) < \theta_\C^+$, being
strictly decreasing since $\theta'(t)<0$, or
$\theta_\C^+<\theta(t) < \theta_\C^-+2\pi$ being strictly increasing.
Therefore, the phase line has two equilibria modulo $2\pi$:
$\theta_\C^+$ is a repeller and $\theta_\C^-$ is an attractor.
Every nonconstant solution is heteroclinic, modulo $2\pi$, from the
repelling equilibrium to the attracting one.
As $\C$ increases, these two equilibria
approach each other until they coalesce at the critical value $\C=\sqrt2$.

In the critical regime, $\C=\sqrt2$, the two zeros of the subcritical regime 
become a single double zero, given by
\begin{align*}
\theta_\C=-\frac\pi4 \mod 2\pi.
\end{align*}
The phase line becomes tangent to the axis, and the 
resulting equilibrium is 
degenerate. Any nonconstant solution is increasing and bounded 
between $-\pi/4$ and $7\pi/4$.

In the supercritical regime, $\C>\sqrt2$, the function $\F$ is strictly 
positive, since
\begin{align*}
\F(\theta)\ge \C-\sqrt2>0.
\end{align*}
Thus, there are no equilibria. 
The function $\theta$ is strictly increasing 
and traverses the angular circle repeatedly.

\subsection{Equilibrium solutions.} Next, we will provide explicit
expressions for the equilibrium solutions in the subcritical and critical 
regime. The solutions that correspond to nonconstant $\theta$ functions 
will be analyzed in Section~\ref{sec:solucao}.

\begin{proposition}[Equilibrium solutions]\label{propEqSol}
Let $\S$ be a hypersurface in $\Hyp^2\times\Hyp^2$ with constant mean
curvature $H\geq 0$
parameterized by 
\begin{align*}
X(u,v,t) = (u,y(t),v,w(t)),
\end{align*}
where the generating curve $(y(t),w(t))$ corresponds to
an equilibrium solution of~\eqref{eq:mestra}. Then $H\in [0,\sqrt{2}/3]$
and we have one of the following
solutions, for $y_0,w_0>0$:
\begin{equation}\label{eq:equilsol}
\boxed{\;
y(t) = y_0e^{\cos(\theta_\C^+)t},\quad 
w(t) = w_0e^{\sin(\theta_\C^+)t}\;}
\quad
\boxed{\;
y(t) = y_0e^{-\sin(\theta_\C^+)t},\quad
w(t) = w_0e^{-\cos(\theta_\C^+)t},\;}
\end{equation}
where $\theta_\C^+$ is the unique solution to~\eqref{eq:zeros-F}
in $[-\pi/4,\pi/4]$, for $\C = 3H$.
\end{proposition}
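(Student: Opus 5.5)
An equilibrium solution means $\theta\equiv\theta_*$ is constant with $\F(\theta_*)=0$. The plan has three steps: bound $H$, identify the two equilibria, and integrate \eqref{eq:theta-def}.

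\textbf{Range of $H$.} By \eqref{eq:zeros-F}, a zero of $\F$ exists exactly when $|\C|\le\sqrt2$. Since $H\ge0$ and $\C=3H$, this gives $0\le 3H\le\sqrt2$, that is, $H\in[0,\sqrt2/3]$.

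\textbf{The two equilibria.} Next I use the description of the zeros from the phase-line analysis. Modulo $2\pi$ the zeros are $\theta_\C^+$ and $\theta_\C^-$, with $-3\pi/4\le\theta_\C^-<\theta_\C^+\le\pi/4$ (they coincide at $-\pi/4$ when $\C=\sqrt2$). They are linked by \eqref{eqrelatethc}, $\theta_\C^-=-\pi/2-\theta_\C^+$.

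I would first check that $\theta_\C^+$ is the unique solution of \eqref{eq:zeros-F} in $[-\pi/4,\pi/4]$. On that interval $\theta-\pi/4\in[-\pi/2,0]$, where $\sin$ is strictly increasing. The right-hand side $-\C/\sqrt2$ lies in $[-1,0]$, so there is exactly one solution there. The other zero, $\theta_\C^-$, is the reflection of $\theta_\C^+$ about $-\pi/4$, so it lies in $[-3\pi/4,-\pi/4]$. Hence it is the root that is not in $(-\pi/4,\pi/4]$.

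The trigonometry of the second zero then follows from the relation above:
\begin{align*}
\cos\theta_\C^-=\cos\!\left(\tfrac\pi2+\theta_\C^+\right)=-\sin\theta_\C^+,\qquad
\sin\theta_\C^-=-\sin\!\left(\tfrac\pi2+\theta_\C^+\right)=-\cos\theta_\C^+.
\end{align*}

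\textbf{Reconstruction.} With $\theta$ constant, \eqref{eq:theta-def} reads $y'/y=\cos\theta_*$ and $w'/w=\sin\theta_*$. These integrate directly to $y=y_0e^{\cos\theta_* t}$ and $w=w_0e^{\sin\theta_* t}$, with $y_0,w_0>0$. Taking $\theta_*=\theta_\C^+$ gives the first boxed family. Taking $\theta_*=\theta_\C^-$ and substituting the identities above gives the second. Adding a multiple of $2\pi$ to $\theta_*$ does not change the curve.

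Note that Theorem~\ref{thm:integrais-reconstrucao} does not apply here, since $\F(\theta)=0$; the direct integration replaces it. Conversely, each such curve has $\theta'=0=\F(\theta)$, so by Proposition~\ref{prop:curvaturas-principais} it gives a CMC hypersurface with $H=\C/3$.

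\textbf{Main obstacle.} The only delicate point is the bookkeeping of which root is labelled $\theta_\C^+$ under the chosen normalization, together with the sign identities for $\theta_\C^-$. Once \eqref{eqrelatethc} is used, everything else is immediate.
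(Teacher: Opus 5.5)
Your proposal is correct and follows the paper's proof essentially step by step. You get the range of $H$ from the existence of zeros of $\F$, use the relation $\theta_\C^-=-\pi/2-\theta_\C^+$ to obtain $\cos\theta_\C^-=-\sin\theta_\C^+$ and $\sin\theta_\C^-=-\cos\theta_\C^+$, and integrate \eqref{eq:theta-def} directly; your monotonicity check that $\theta_\C^+$ is the unique root in $[-\pi/4,\pi/4]$ fills in a detail the paper only asserts.
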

\begin{proof}
As previously observed, equilibrium solutions appear only for
$3H = \C\in[0,\sqrt{2}]$, which correspond to 
$H\in[0,\sqrt{2}/3]$. In this 
case, if $\C<\sqrt{2}$, there exist two solutions 
$\theta_\C^-,\,\theta_\C^+\in [-3\pi/4,\pi/4]$
for~\eqref{eq:zeros-F}, which relate by~\eqref{eqrelatethc}.
When $H = \sqrt{2}/3$, then $\C = \sqrt{2}$, in which case
$\theta^+_{\sqrt{2}} = \theta^-_{\sqrt{2}} = -\pi/4$.

The rest of the proof follows directly from~\eqref{eq:theta-def},
after observing that (also in the critical regime)
\begin{align*}
\cos(\theta_\C^+)& = -\sin(\theta_\C^-),&
\sin(\theta_\C^+)& = -\cos(\theta_\C^-).\qedhere
\end{align*}
\end{proof}

\section{The Riccati equation and explicit solutions}
\label{sec:solucao}

In this section, we use the classical Weierstrass substitution to transform
the angular equation into a Riccati equation, a standard form in the theory of
ordinary differential equations; see, for instance, Ince~\cite[pp.~23--25]{Ince1956}.

\begin{lemma}[Change of variables]\label{lem:weierstrass}
The substitution
\begin{equation}\label{eq:u-def}
u(t):=\tan\!\left(\frac{\theta(t)-\pi/4}{2}\right)
\quad\Longleftrightarrow\quad
\theta(t)=2\arctan u(t)+\frac\pi4,
\end{equation}
transforms the CMC equation~\eqref{eq:edo-cmc} into the polynomial Riccati
equation with constant coefficients
\begin{equation}\label{eq:riccati}
2u'(t)=\C u(t)^2+2\sqrt2\,u(t)+\C.
\end{equation}
When $\C>0$, multiplying by $\C$ and rearranging, we obtain the factored form
\begin{equation}\label{eq:vform}
2\C u'(t)=\bigl(\C u(t)+\sqrt2\bigr)^2-(2-\C^2).
\end{equation}
\end{lemma}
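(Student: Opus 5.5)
The plan is a direct substitution: differentiate the relation between $\theta$ and $u$, then rewrite $\F(\theta)$ in terms of $u$ with the half-angle formulas. Set $\phi=\theta-\pi/4$, so that $u=\tan(\phi/2)$ and $\phi'=\theta'$. Working on an interval where $\phi/2\neq \pi/2 \bmod \pi$, so that $u$ is defined and smooth, we have $\theta=2\arctan u+\pi/4$. Differentiating gives
\begin{align*}
\theta'=\frac{2u'}{1+u^2}.
\end{align*}

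Next, by the form~\eqref{eq:F-def}, the right-hand side is $\F(\theta)=\sqrt2\sin\phi+\C$. The Weierstrass identity gives
\begin{align*}
\sin\phi=\frac{2\tan(\phi/2)}{1+\tan^2(\phi/2)}=\frac{2u}{1+u^2}.
\end{align*}
Substituting this into~\eqref{eq:edo-cmc} yields
\begin{align*}
\frac{2u'}{1+u^2}=\frac{2\sqrt2\,u}{1+u^2}+\C .
\end{align*}
Multiplying through by $1+u^2>0$ gives $2u'=2\sqrt2\,u+\C(1+u^2)=\C u^2+2\sqrt2\,u+\C$, which is~\eqref{eq:riccati}.

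Conversely, if $u$ solves~\eqref{eq:riccati}, then $\theta:=2\arctan u+\pi/4$ solves~\eqref{eq:edo-cmc}. This follows by running the same computation backwards, since every step is reversible. This establishes the equivalence indicated by the double arrow in the statement.

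For the factored form~\eqref{eq:vform}, assume $\C>0$ and multiply~\eqref{eq:riccati} by $\C$, which gives $2\C u'=\C^2u^2+2\sqrt2\,\C u+\C^2$. Completing the square, $\C^2u^2+2\sqrt2\,\C u=(\C u+\sqrt2)^2-2$, so the right-hand side equals $(\C u+\sqrt2)^2-(2-\C^2)$.

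The only real obstacle is the domain issue. The substitution breaks down where $\theta\equiv \pi/4+\pi \pmod{2\pi}$, because $u$ blows up there. I would handle this by stating the equivalence on maximal intervals avoiding these values, or by viewing $u$ as taking values in $\R\cup\{\infty\}$, where the Riccati flow extends through infinity. The rest is routine trigonometric algebra.
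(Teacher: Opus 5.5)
Your proposal is correct and follows the paper's proof essentially step for step. You differentiate $\theta=2\arctan u+\pi/4$ to get $\theta'=2u'/(1+u^2)$, rewrite $\sqrt2\sin(\theta-\pi/4)$ as $2\sqrt2\,u/(1+u^2)$, clear denominators, and then multiply by $\C$ and complete the square. Your remarks on reversibility and on the pole of $u$ at $\theta\equiv 5\pi/4 \pmod{2\pi}$ are sound additions; the paper only deals with that pole later, in the $\coth$ branch of the subcritical case.
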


\begin{proof}
From the definition, $\theta'=2u'/(1+u^2)$. Using $\sin(2\arctan u)=2u/(1+u^2)$,
\begin{align*}
\F(\theta)=\sqrt2\,\sin(\theta-\pi/4)+\C
=\frac{2\sqrt2\,u}{1+u^2}+\C
=\frac{2\sqrt2\,u+\C(1+u^2)}{1+u^2}.
\end{align*}
Equating to $\theta'=2u'/(1+u^2)$,
\begin{align*}
2u'=\C u^2+2\sqrt2\,u+\C,
\end{align*}
which is~\eqref{eq:riccati}. For~\eqref{eq:vform}, we multiply both sides
of~\eqref{eq:riccati} by $\C$:
\begin{align*}
2\C u'=\C^2u^2+2\sqrt2\,\C u+\C^2=(\C u)^2+2\sqrt2\,(\C u)+\C^2.
\end{align*}
Completing the square in $\C u$,
\begin{align*}
(\C u)^2+2\sqrt2\,(\C u)+\C^2=(\C u+\sqrt2)^2-2+\C^2=(\C u+\sqrt2)^2-(2-\C^2).\,\,\,\,\,\,\qedhere
\end{align*}
\end{proof}

\subsection[Minimal hypersurface case]{Minimal hypersurface case ($\C=0$)}
\label{subsec:minimo}

When $\C=0$, the Riccati equation 
reduces to the linear equation
$u'=\sqrt2\,u$. Thus, $u\equiv 0$ is the unique equilibrium solution 
(which yields $\theta(t)\equiv \pi/4 \mod \pi$ as previously observed), and
any nonconstant solution is
\begin{align*}
u(t)=a\,e^{\sqrt2 t}, \qquad a>0.
\end{align*}
Since a translation of the parameter $t$ allows us to absorb the constant 
$a$, we can take, without loss of generality,
\begin{align*}
u(t)=e^{\sqrt2 t}.
\end{align*}
Therefore,
\begin{equation}\label{eq:min-theta}
\theta(t)=\frac\pi4+2\arctan(e^{\sqrt2 t}),
\qquad
\F(\theta(t))=\sqrt2\,\sech(\sqrt2\,t).
\end{equation}
Thus, Theorem~\ref{thm:integrais-reconstrucao} gives that the 
solutions corresponding to nonconstant $\theta$ function are
\begin{equation}\label{eq:min-yw}
\boxed{\;
\begin{aligned}
y(t)
&=
\sqrt{\frac{\sqrt2\,A}{B}}\,
\sqrt{\sech(\sqrt2\,t)}\,
\exp\!\left[
-\arctan(e^{\sqrt2 t})-\frac{\pi}{8}
\right],
\\[2pt]
w(t)
&=
\sqrt{\sqrt2\,AB}\,
\sqrt{\sech(\sqrt2\,t)}\,
\exp\!\left[
\arctan(e^{\sqrt2 t})+\frac{\pi}{8}
\right].
\end{aligned}
\;}
\end{equation}

 In Corollary~\ref{corMainthm} we
will obtain a normalization that will further
simplify~\eqref{eq:min-yw}. The normalized representative displayed in Figure~\ref{fig:curvas} is a
symmetric loop under $(y,w)\leftrightarrow(w,y)$.

In the explicit solutions that will be presented for the next regimes, 
a translation constant $t_0$ appears in 
general. Since the angular equation is autonomous, this constant merely 
shifts the origin of the parameter $t$. Therefore, in all non-constant 
regimes, we will set $t_0=0$ without loss of generality.

\subsection[Subcritical case]{Subcritical case ($0<\C<\sqrt{2}$)}
\label{subsec:subcritico}

Assume that $0<\C<\sqrt{2}$ and set
\[
\alpha:=\sqrt{2-\C^{2}}.
\]
Introduce the shifted Riccati variable
\[
v(t):=\C u(t)+\sqrt{2}.
\]
Then the factored Riccati equation~\eqref{eq:vform} becomes
\begin{equation}\label{eq:sub-vform}
2v'=v^{2}-\alpha^{2}.
\end{equation}
Equivalently,
\begin{equation}\label{eq:sub-vform-half}
v'=\frac{1}{2}\bigl(v^{2}-\alpha^{2}\bigr).
\end{equation}

We first solve this equation on the bounded region $|v|<\alpha$. In this
region we have $v^{2}-\alpha^{2}=-(\alpha^{2}-v^{2})$, and therefore
\[
v'=-\frac{1}{2}\bigl(\alpha^{2}-v^{2}\bigr).
\]
Separating variables gives
\begin{equation}\label{eq:sub-sep-tanh}
\frac{dv}{\alpha^{2}-v^{2}}
=
-\frac{1}{2}\,dt.
\end{equation}
Hence
\begin{equation}\label{eq:sub-int-tanh}
\frac{1}{\alpha}
\operatorname{arctanh}\left(\frac{v}{\alpha}\right)
=
-\frac{1}{2}(t-t_{0}).
\end{equation}
Thus
\[
\operatorname{arctanh}\left(\frac{v}{\alpha}\right)
=
-\frac{\alpha}{2}(t-t_{0}),
\]
and consequently
\[
v(t)
=
-\alpha
\tanh\left(\frac{\alpha}{2}(t-t_{0})\right).
\]
Since the equation is autonomous, the constant $t_{0}$ can be absorbed by a
translation of the parameter $t$. Therefore we may assume $t_{0}=0$ and obtain
\begin{equation}\label{eq:v-tanh}
v(t)
=
-\alpha
\tanh\left(\frac{\alpha t}{2}\right).
\end{equation}

Returning to the variable $u$, the relation $v=\C u+\sqrt{2}$ gives
\[
\C u=v-\sqrt{2}.
\]
Using~\eqref{eq:v-tanh}, we obtain
\begin{equation}\label{eq:sub-u-tanh}
\boxed{\;
\C u(t)
=
-\sqrt{2}
-
\alpha
\tanh\left(\frac{\alpha t}{2}\right),
\qquad
0<\C<\sqrt{2}.
\;}
\end{equation}
Equivalently,
\begin{equation}\label{eq:u-tanh-explicit}
u(t)
=
\frac{
-\sqrt{2}
-
\alpha
\tanh\left(\frac{\alpha t}{2}\right)
}{\C}.
\end{equation}
Therefore the corresponding angular function is
\begin{equation}\label{eq:sub-theta-tanh}
\theta(t)
=
\frac{\pi}{4}
+
2\arctan\left(
\frac{
-\sqrt{2}
-
\alpha
\tanh\left(\frac{\alpha t}{2}\right)
}{\C}
\right).
\end{equation}
This solution satisfies
\[
\theta_{\C}^{-}<\theta(t)<\theta_{\C}^{+}
\]
and is strictly decreasing. Hence it is the heteroclinic branch from the
repelling equilibrium $\theta_{\C}^{+}$ to the attracting equilibrium
$\theta_{\C}^{-}$.

We now reconstruct the generating functions $y$ and $w$. Since
\[
u=\frac{v-\sqrt{2}}{\C},
\qquad
1+u^{2}
=
\frac{\C^{2}+\bigl(v-\sqrt{2}\bigr)^{2}}{\C^{2}},
\]
and since
\[
\theta'=\frac{2u'}{1+u^{2}},
\qquad
v'=\C u',
\]
we get
\begin{equation}\label{eq:F-in-v}
\F(\theta)
=
\frac{
\C\bigl(v^{2}-\alpha^{2}\bigr)
}
{
\C^{2}+\bigl(v-\sqrt{2}\bigr)^{2}
}.
\end{equation}
For the solution~\eqref{eq:v-tanh}, we have
\[
v(t)^{2}-\alpha^{2}
=
-\alpha^{2}\sech^{2}\left(\frac{\alpha t}{2}\right)
\]
and
\[
v(t)-\sqrt{2}
=
-\sqrt{2}
-
\alpha
\tanh\left(\frac{\alpha t}{2}\right).
\]
Define
\begin{equation}\label{eq:D-tanh}
D_{\mathrm{tanh}}(t)
:=
\C^{2}
+
\left(
\sqrt{2}
+
\alpha
\tanh\left(\frac{\alpha t}{2}\right)
\right)^{2}.
\end{equation}
Then
\begin{equation}\label{eq:sub-F-tanh}
|\F(\theta(t))|
=
\frac{
\C\alpha^{2}\sech^{2}\left(\frac{\alpha t}{2}\right)
}
{
D_{\mathrm{tanh}}(t)
}.
\end{equation}
By the universal reconstruction formula~\eqref{eq:yw-univ}, the bounded
subcritical branch is
\begin{equation}\label{eq:sub-yw-tanh}
\boxed{\;
\begin{aligned}
y(t)
&=
\sqrt{\frac{A}{B}}\,
\frac{
\alpha\sqrt{\C}
\sech\left(\frac{\alpha t}{2}\right)}
{
\sqrt{D_{\mathrm{tanh}}(t)}
}
\exp\left[
-\frac{1}{2}\bigl(\theta(t)-\C t\bigr)
\right],
\\[2pt]
w(t)
&=
\sqrt{AB}\,
\frac{
\alpha\sqrt{\C}
\sech\left(\frac{\alpha t}{2}\right)}
{
\sqrt{D_{\mathrm{tanh}}(t)}
}
\exp\left[
\frac{1}{2}\bigl(\theta(t)-\C t\bigr)
\right].
\end{aligned}
\;}
\end{equation}
Here $\theta(t)$ is the function defined in~\eqref{eq:sub-theta-tanh};
the corresponding generating curve is shown in Figure~\ref{fig:curvas}.

We now describe the second possible nonconstant branch. This branch corresponds
to the unbounded solutions of~\eqref{eq:sub-vform}, namely the region
$|v|>\alpha$. In this region we separate variables as
\begin{equation}\label{eq:sub-sep-coth}
\frac{dv}{v^{2}-\alpha^{2}}
=
\frac{1}{2}\,dt.
\end{equation}
Integrating and translating the parameter $t$, we obtain
\begin{equation}\label{eq:v-coth}
v(t)
=
-\alpha
\operatorname{coth}\left(\frac{\alpha t}{2}\right),
\qquad
t\neq 0.
\end{equation}
Returning to $u$, we get
\begin{equation}\label{eq:sub-u-coth}
\C u(t)
=
-\sqrt{2}
-
\alpha
\operatorname{coth}\left(\frac{\alpha t}{2}\right).
\end{equation}
Thus
\begin{equation}\label{eq:u-coth-explicit}
u(t)
=
\frac{
-\sqrt{2}
-
\alpha
\operatorname{coth}\left(\frac{\alpha t}{2}\right)
}{\C}.
\end{equation}
The coordinate $u$ has a simple pole at $t=0$, but this is only a singularity of the Weierstrass substitution. To obtain a continuous lift of the angular variable, we define \[ \theta(t)= \begin{cases} \dfrac{\pi}{4}+2\arctan u(t), & t<0,\\[6pt] \dfrac{5\pi}{4}, & t=0,\\[6pt] \dfrac{9\pi}{4}+2\arctan u(t), & t>0. \end{cases} \] Indeed, the one-sided limits of the principal expression $\frac{\pi}{4}+2\arctan u(t)$ are $\frac{5\pi}{4}$ as $t\to0^-$ and $-\frac{3\pi}{4}$ as $t\to0^+$, which agree modulo $2\pi$.
This branch satisfies
\[
\theta_{\C}^{+}<\theta(t)<\theta_{\C}^{-}+2\pi
\]
and is strictly increasing. Hence it travels from $\theta_{\C}^{+}$ to
$\theta_{\C}^{-}+2\pi$ along the complementary component of the angular circle.

For this branch, define
\begin{equation}\label{eq:D-coth}
D_{\mathrm{coth}}(t)
:=
\C^{2}
+
\left(
\sqrt{2}
+
\alpha
\operatorname{coth}\left(\frac{\alpha t}{2}\right)
\right)^{2}.
\end{equation}
Then
\begin{equation}\label{eq:sub-F-coth}
|\F(\theta(t))|
=
\frac{
\C\alpha^{2}
\csch^{2}\left(\frac{\alpha t}{2}\right)}
{
D_{\mathrm{coth}}(t)
}.
\end{equation}
Again by the universal reconstruction formula~\eqref{eq:yw-univ}, the
generating functions for the unbounded branch are
\begin{equation}\label{eq:sub-yw-coth}
\boxed{\;
\begin{aligned}
y(t)
&=
\sqrt{\frac{A}{B}}\,
\frac{
\alpha\sqrt{\C}\left|\csch\left(\frac{\alpha t}{2}\right)\right|
}
{
\sqrt{D_{\mathrm{coth}}(t)}
}
\exp\left[
-\frac{1}{2}\bigl(\theta(t)-\C t\bigr)
\right],
\\[2pt]
w(t)
&=
\sqrt{AB}\,
\frac{
\alpha\sqrt{\C}
\left|\csch\left(\frac{\alpha t}{2}\right)\right|}
{
\sqrt{D_{\mathrm{coth}}(t)}
}
\exp\left[
\frac{1}{2}\bigl(\theta(t)-\C t\bigr)
\right].
\end{aligned}
\;}
\end{equation}
Although the Riccati coordinate has a pole at $t=0$, the lifted angular
function and the reconstructed generating curve extend across this value.
This second branch is the complementary subcritical branch; the corresponding
generating curve is shown in Figure~\ref{fig:curvas}.

\subsection{Critical case ($\C=\sqrt2$)}
\label{subsec:critico}

When $\C=\sqrt2$, equation \eqref{eq:vform} becomes
\[
2v'=v^2.
\]
Its nonconstant solutions are rational in the Riccati variable. In the angular
variable, choosing the origin of $t$ conveniently, we may write the regular
branch as
\begin{equation}\label{eq:crit-theta}
\theta(t)=\frac{3\pi}{4}+2\arctan(\sqrt2\,t),
\qquad
\F(\theta(t))=\frac{2\sqrt2}{1+2t^2}.
\end{equation}
Thus, by the universal reconstruction formula \eqref{eq:yw-univ}, we obtain
\begin{equation}\label{eq:crit-yw-general}
\boxed{\;
\begin{aligned}
y(t)
&=
\sqrt{\frac AB}\,
\frac{\sqrt{2\sqrt2}}{\sqrt{1+2t^2}}\,
\exp\!\left[
-\frac{3\pi}{8}
-\arctan(\sqrt2\,t)
+\frac{\sqrt2}{2}t
\right],
\\[2pt]
w(t)
&=
\sqrt{AB}\,
\frac{\sqrt{2\sqrt2}}{\sqrt{1+2t^2}}\,
\exp\!\left[
\frac{3\pi}{8}
+\arctan(\sqrt2\,t)
-\frac{\sqrt2}{2}t
\right],
\end{aligned}
\;}
\end{equation}
where $A>0$ and $B>0$ are the integration constants appearing in
Theorem~\ref{thm:integrais-reconstrucao}. The corresponding generating curve
is shown in Figure~\ref{fig:curvas}.

\subsection{Supercritical case ($\C>\sqrt2$)}
\label{subsec:supercritico}

With $\omega=\sqrt{\C^2-2}$, \eqref{eq:vform} becomes $2v'=v^2+\omega^2$, 
which has the periodic solution $v(t)=\omega\tan(\omega t/2)$, hence
\begin{equation}\label{eq:sup-u}
\C\,u(t)=-\sqrt2+\omega\tan(\omega t/2),
\qquad
\theta(t)=\frac\pi4+2\arctan u(t).
\end{equation}
Writing $D(t)=\C^2+\big(-\sqrt2+\omega\tan(\omega t/2)\big)^2$, we have
\begin{equation}\label{eq:sup-F}
\F(\theta(t))=\frac{\C\,\omega^2\,\sec^2(\omega t/2)}{D(t)}>0,
\end{equation}
and from \eqref{eq:yw-univ} we obtain the generating functions
\eqref{eq:sup-yw-final}. The corresponding generating curve is shown in
Figure~\ref{fig:curvas}.

\begin{equation}\label{eq:sup-yw-final}
\boxed{\;
\begin{aligned}
y(t)&=\sqrt{\frac AB}\;\frac{\omega\sqrt\C\,|\!\sec(\omega t/2)|}{\sqrt{D(t)}}\,
       e^{-(\theta(t)-\C t)/2},\\[2pt]
w(t)&=\sqrt{AB}\;\frac{\omega\sqrt\C\,|\!\sec(\omega t/2)|}{\sqrt{D(t)}}\,
       e^{(\theta(t)-\C t)/2}.
\end{aligned}
\;}
\end{equation}

\subsection{Summary.}

In Figure~\ref{fig:curvas}, we visualize the generating
curve for some normalized cases. 
The analysis carried out in the four cases as above,
together with Proposition~\ref{propEqSol}, 
gives our main existence result concerning
hypersurfaces of $\Hyp^2\times\Hyp^2$ invariant under 
a double horocyclic action:

\begin{theorem}\label{mainThm}
Let $\S$ be a hypersurface of $\Hyp^2\times\Hyp^2$ which is invariant under the
double horocyclic action of~\eqref{eqaction}. Then, $\S$ has 
constant mean curvature $H\geq 0$
if and only if there exists an open interval $I$
such that $\S$ is parameterized by 
\begin{align*}
X(u,v,t) = (u,y(t),v,w(t)),\qquad (u,v,t)\in \R^2\times I,
\end{align*}
where $y(t)$ and $w(t)$ are either given by Proposition~\ref{propEqSol}
or given by~\eqref{eq:min-yw}, if $H = 0$,
by~\eqref{eq:sub-yw-tanh} or~\eqref{eq:sub-yw-coth}, if $H\in (0,\sqrt{2}/3)$, by~\eqref{eq:crit-yw-general}, if $H = \sqrt{2}/3$
and by~\eqref{eq:sup-yw-final}, if $H >\sqrt{2}/3$.
\end{theorem}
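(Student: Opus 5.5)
The plan is to prove both directions by combining the reduction of Section~\ref{sec:setup} with the phase-line analysis of Section~\ref{sec:fase} and the explicit integrations of Section~\ref{sec:solucao}.

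\emph{Forward direction.} Let $\S$ be invariant under the action~\eqref{eqaction} and have constant mean curvature $H\ge0$. Translating horizontally in each factor, $\S$ is generated by a curve in the slice $\{x_1=x_2=0\}$. Reparametrizing by arc length in the orbit metric gives~\eqref{eq:arco-log} on some open interval $I$, so $\S$ is parametrized as in~\eqref{eq:parametrizacao}. I would pick a smooth lift $\theta$ as in~\eqref{eq:theta-def}.

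One caveat concerns orientation. Proposition~\ref{prop:curvaturas-principais} computes $H$ with respect to the specific normal $N$. If $\S$ has $H\ge0$ for the opposite normal, I would reverse the parameter $t\mapsto -t$. This replaces $\theta$ by $\theta+\pi$ and $N$ by $-N$, so we may assume $H$ is measured with respect to $N$.

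Then~\eqref{eq:edo-cmc} holds with $\C=3H\ge0$, and $\theta$ is a solution of the autonomous equation~\eqref{eq:mestra}. By uniqueness for this Lipschitz ODE, $\theta$ is either constant or $\F(\theta(t))\ne0$ for all $t$.

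\emph{Constant case.} Proposition~\ref{propEqSol} applies directly. It forces $H\le\sqrt2/3$ and gives the listed exponential curves.

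\emph{Nonconstant case.} Since $\F(\theta)\neq 0$ along the whole solution, Theorem~\ref{thm:integrais-reconstrucao} holds on all of $I$, and $(y,w)$ is given by~\eqref{eq:yw-univ} with constants $A,B>0$. It remains to identify $\theta$ up to translation of $t$, which is allowed because the equation is autonomous. Via Lemma~\ref{lem:weierstrass}, $\theta$ corresponds to a solution of the Riccati equation~\eqref{eq:riccati}, and I split into the four regimes:
\begin{itemize}
\item for $\C=0$, the equation is linear; the cases $a<0$ and $a>0$ are swapped by $t\mapsto -t$ and a shift of $\theta$ by $2\pi$ in the lift;
\item for $0<\C<\sqrt2$, the nonconstant solutions of $2v'=v^2-\alpha^2$ are exactly the $\tanh$ branch ($|v|<\alpha$) and the $\coth$ branch ($|v|>\alpha$), up to translation, by uniqueness and separation of variables;
\item for $\C=\sqrt2$, the nonconstant solutions of $2v'=v^2$ are $v=-2/(t-t_0)$, which reproduces~\eqref{eq:crit-theta};
\item for $\C>\sqrt2$, the solutions are the $\tan$ solutions.
\end{itemize}
Substituting each $\theta$ into~\eqref{eq:yw-univ} yields the boxed formulas.

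\emph{Converse.} Given $(y,w)$ in any of the listed forms, I would check~\eqref{eq:theta-def} with the corresponding $\theta$. For the equilibria this is immediate. For the reconstructed families, I would use that $\log(yw)=\log A+\log|\F(\theta)|$ and $\log(w/y)=\log B+\theta-\C t$. Differentiating both and using $\theta'=\F(\theta)$ gives $y'/y=\cos\theta$ and $w'/w=\sin\theta$, which is the computation of Theorem~\ref{thm:integrais-reconstrucao} run in reverse. Then Proposition~\ref{prop:curvaturas-principais} gives $H=\C/3$.

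\emph{Main obstacle.} I expect the hard points to be the lifts across the poles of the Weierstrass variable $u$, in the $\coth$ branch and periodically in the supercritical branch, together with the sign and orientation bookkeeping. I would handle the poles by working with $\theta$ itself as a smooth solution of~\eqref{eq:mestra}, which is globally defined on $\R$ because $\F$ is bounded. The explicit formula then only needs to agree with $\theta$ modulo $2\pi$ away from the isolated poles, which is harmless since $\cos\theta$ and $\sin\theta$ are what enter~\eqref{eq:theta-def}. Continuity of $y$ and $w$ across $t=0$ then follows from the limits of $|\csch|/\sqrt{D_{\mathrm{coth}}}$.
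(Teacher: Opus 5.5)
Your route is the paper's. Its proof of Theorem~\ref{mainThm} is just Proposition~\ref{propEqSol} combined with the regime-by-regime integration of Section~\ref{sec:solucao}, fed through Theorem~\ref{thm:integrais-reconstrucao}. Your additions are correct and fill real omissions:
\begin{itemize}
\item the reversal $t\mapsto -t$, which sends $\theta\mapsto\theta+\pi$, $N\mapsto -N$ and $H\mapsto -H$, fixes the orientation;
\item the dichotomy ``$\theta$ constant or $\F(\theta)\neq0$ on all of $I$'' follows from uniqueness;
\item the $a<0$ minimal branch, which the paper drops by writing only $a>0$, is recovered by reversal;
\item the converse follows by differentiating $\log(yw)$ and $\log(w/y)$.
\end{itemize}

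The step that fails as written is your handling of the lift. The $2\pi$-ambiguity is not harmless ``since $\cos\theta$ and $\sin\theta$ are what enter \eqref{eq:theta-def}''. In \eqref{eq:yw-univ}, and so in every boxed formula, $\theta$ itself appears through $\exp[\pm\frac12(\theta-\C t)]$. A shift $\theta\mapsto\theta+2\pi k$ with $k$ constant on all of $I$ is indeed harmless, but for a different reason: it is absorbed by $B\mapsto e^{2\pi k}B$. A shift whose $k$ jumps is not absorbed.

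In the supercritical case, the paper's $\theta$ in \eqref{eq:sup-u} is the principal branch $\frac\pi4+2\arctan u$. It drops by $2\pi$ at every pole of $\tan(\omega t/2)$, while $\omega|\sec(\omega t/2)|/\sqrt{D(t)}\to1$ there. So \eqref{eq:sup-yw-final}, read literally, multiplies $y$ by $e^{\pi}$ and $w$ by $e^{-\pi}$ across each pole. It is therefore discontinuous and parametrizes no hypersurface over an interval $I$ that contains a pole. Consequently, substituting your smooth $\theta$ into \eqref{eq:yw-univ} does not ``yield the boxed formula'' in that situation.

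There are two ways to repair this:
\begin{itemize}
\item State that $\theta$ in the boxed formulas is the continuous lift, adding $2\pi$ at each pole, exactly as the paper does by hand for the $\coth$ branch. This is the fix that keeps the forward direction valid for arbitrary $I$.
\item Restrict $I$ to a single interval between consecutive poles, as in Corollary~\ref{corMainthm}(iv). This loses surfaces generated over longer intervals.
\end{itemize}
Likewise, your continuity claim at $t=0$ for the $\coth$ branch needs the paper's lifted $\theta$ in addition to the limit $\alpha|\csch(\alpha t/2)|/\sqrt{D_{\mathrm{coth}}(t)}\to1$.
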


Next, we note that for any $a,b\in \R$, the map 
\begin{align*}
\psi_{a,b}(x_1,y_1,x_2,y_2) = (e^ax_1,e^ay_1,e^bx_2,e^by_2)
\end{align*}
is an isometry of $\Hyp^2\times \Hyp^2$. Indeed, it is the composition
of a hyperbolic translation in the first factor with a hyperbolic translation
in the second factor. 

If $\S$ is a hypersurface invariant under the double horocyclic
action given by~\eqref{eqaction}, parameterized as in Theorem~\ref{mainThm},
the surface $\S_{a,b} = \psi_{a,b}(\S)$ is also a hypersurface invariant
by the same action (up to a reparameterization) 
and with the same constant mean curvature. If $(y(t),w(t))$
is the generating curve of $\S$, then the generating
curve of $\S_{a,b}$ is simply $(e^ay(t),e^bw(t))$.
Hence, after appropriately choosing $a$ and $b$ to cancel out the 
integration constants $A$ and $B$ given by
Theorem~\ref{thm:integrais-reconstrucao},
we have the following normalization.

\begin{corollary}\label{corMainthm} After an ambient isometry, any hypersurface of
$\Hyp^2\times\Hyp^2$ with constant mean curvature $H\geq 0$ which is
invariant under the double horocyclic action can be parameterized as
\[
X(u,v,t)=(u,y(t),v,w(t)).
\]
Moreover, either $\theta$ is constant or one of the following nonconstant
normal forms holds.

If $\theta$ is constant, then necessarily $0\leq \C\leq\sqrt2$ and
\[
y(t)=e^{ct},
\qquad
w(t)=e^{st},
\]
where
\[
c^2+s^2=1,
\qquad
c-s=\C.
\]

If $\theta$ is nonconstant, then the following alternatives occur.

\begin{enumerate}[label=\textup{(\roman*)},leftmargin=*]

\item If $H=0$, then
\begin{equation*}
\boxed{\;
\begin{aligned}
y(t)&=e^{\pi/4}\sqrt{\sech(\sqrt2\,t)}\,\exp\!\left[-\arctan\!\left(e^{\sqrt2 t}\right)\right],\\[2pt]
w(t)&=e^{-\pi/4}\sqrt{\sech(\sqrt2\,t)}\,\exp\!\left[\arctan\!\left(e^{\sqrt2 t}\right)\right].
\end{aligned}\;}
\end{equation*}

\item If $0<H<\sqrt2/3$, then $0<\C<\sqrt2$. Writing $\alpha=\sqrt{2-\C^2}$, there
are \emph{two} nonconstant normal forms, one for each branch of
Subsection~\ref{subsec:subcritico}. After the preceding ambient normalization, the bounded ($\tanh$) branch is
\[
\boxed{\;
y(t)=
\frac{\alpha\sqrt\C\,\sech(\tfrac{\alpha t}{2})}{\sqrt{D_{\tanh}(t)}}\,
e^{-(\theta(t)-\C t)/2},
\qquad
w(t)=
\frac{\alpha\sqrt\C\,\sech(\tfrac{\alpha t}{2})}{\sqrt{D_{\tanh}(t)}}\,
e^{(\theta(t)-\C t)/2},
\;}
\]
and the unbounded ($\coth$) branch is
\[
\boxed{\;
y(t)=
\frac{\alpha\sqrt\C\,\bigl|\mathrm{csch}(\tfrac{\alpha t}{2})\bigr|}{\sqrt{D_{\coth}(t)}}\,
e^{-(\theta(t)-\C t)/2},
\qquad
w(t)=
\frac{\alpha\sqrt\C\,\bigl|\mathrm{csch}(\tfrac{\alpha t}{2})\bigr|}{\sqrt{D_{\coth}(t)}}\,
e^{(\theta(t)-\C t)/2},}
\]
where $\theta$, $D_{\tanh}$ and $D_{\coth}$ are as in
Subsection~\ref{subsec:subcritico}.
\item If $H=\sqrt2/3$, then
\begin{equation*}
\boxed{\;
\begin{aligned}
y(t)
&=
\frac{
\exp\!\bigl[-\arctan(\sqrt2\,t)+\tfrac{\sqrt2}{2}t\bigr]}
{\sqrt{1+2t^2}},
\\[2pt]
w(t)
&=
\frac{
\exp\!\bigl[\arctan(\sqrt2\,t)-\tfrac{\sqrt2}{2}t\bigr]}
{\sqrt{1+2t^2}}.
\end{aligned}
\;}
\end{equation*}

\item If $H>\sqrt2/3$, then $\C>\sqrt2$. Writing $\omega=\sqrt{\C^2-2}$, with
$A=B=1$ the normal form is
\[
\boxed{\;
y(t)=
\frac{\omega\sqrt\C\,\bigl|\sec(\tfrac{\omega t}{2})\bigr|}{\sqrt{D(t)}}\,
e^{-(\theta(t)-\C t)/2},
\qquad
w(t)=
\frac{\omega\sqrt\C\,\bigl|\sec(\tfrac{\omega t}{2})\bigr|}{\sqrt{D(t)}}\,
e^{(\theta(t)-\C t)/2},
\;}
\]
valid on any interval where $\tan(\tfrac{\omega t}{2})$ is finite, with $\theta$
and $D$ as in Subsection~\ref{subsec:supercritico}.

\end{enumerate}
\end{corollary}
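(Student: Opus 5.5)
The plan is to derive the corollary directly from Theorem~\ref{mainThm}, using the two-parameter family of isometries $\psi_{a,b}$ to eliminate all multiplicative constants.

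\emph{Step 1: the isometries $\psi_{a,b}$ preserve the setting.} First I would check that $\psi_{a,b}$ is compatible with everything in the reduction. Directly from \eqref{eqaction}, one has $\psi_{a,b}\circ\Phi_{(u,v)}=\Phi_{(e^au,e^bv)}\circ\psi_{a,b}$. Hence $\psi_{a,b}(\S)$ is again invariant under the double horocyclic action. Its generating curve, taken in the slice $\{x_1=x_2=0\}$, is $(e^ay(t),e^bw(t))$. Since $\log y$ and $\log w$ change only by additive constants, the following are all unchanged:
\begin{itemize}
\item the arc-length normalization \eqref{eq:arco-log};
\item the angle $\theta$ defined by \eqref{eq:theta-def};
\item the unit normal $N$ of \eqref{eq:darboux}, as expressed in the frame $E_i$, which is preserved by $\psi_{a,b}$.
\end{itemize}
By Proposition~\ref{prop:curvaturas-principais}, the principal curvatures are therefore unchanged, and $H$ together with its sign is preserved.

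\emph{Step 2: constant $\theta$.} If $\theta$ is constant, then $\F(\theta)=0$, so $\sin\theta-\cos\theta+\C=0$. By Proposition~\ref{propEqSol}, this forces $0\le\C\le\sqrt2$, and $y=y_0e^{ct}$, $w=w_0e^{st}$ with $c=\cos\theta$ and $s=\sin\theta$. These satisfy $c^2+s^2=1$ and $c-s=\C$. Both boxed alternatives of Proposition~\ref{propEqSol} fall under this description, taking $\theta=\theta_\C^{+}$ or $\theta=\theta_\C^{-}$ and using the relations stated at the end of its proof. Choosing $a=-\log y_0$ and $b=-\log w_0$ gives $y=e^{ct}$ and $w=e^{st}$.

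\emph{Step 3: nonconstant $\theta$.} In this case Theorem~\ref{mainThm} gives $(y,w)$ in one of the explicit forms \eqref{eq:min-yw}, \eqref{eq:sub-yw-tanh}, \eqref{eq:sub-yw-coth}, \eqref{eq:crit-yw-general} or \eqref{eq:sup-yw-final}. In every case the time origin has already been fixed using autonomy. Each formula has the shape
\[
y=\lambda_0\sqrt{A/B}\,f(t),\qquad w=\mu_0\sqrt{AB}\,g(t),
\]
where $\lambda_0,\mu_0>0$ are explicit numerical constants. For example, $\lambda_0=2^{1/4}e^{-\pi/8}$ and $\mu_0=2^{1/4}e^{\pi/8}$ in \eqref{eq:min-yw}, with analogous constants involving $e^{\mp3\pi/8}$ and $\sqrt{2\sqrt2}$ in the critical case. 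The composite $\psi_{a,b}$ multiplies these prefactors by $e^a$ and $e^b$ independently, so one can set
\[
e^a=\frac{\lambda_1}{\lambda_0}\sqrt{\frac BA},\qquad e^b=\frac{\mu_1}{\mu_0}\frac{1}{\sqrt{AB}},
\]
for any desired target constants $\lambda_1,\mu_1>0$. Taking $\lambda_1=e^{\pi/4}$ and $\mu_1=e^{-\pi/4}$ gives case (i). Taking $\lambda_1=\mu_1=1$, after absorbing the constant $\sqrt{2\sqrt2}\,e^{\pm3\pi/8}$, gives case (iii). Simply setting $A=B=1$ gives cases (ii) and (iv). Equivalently, the scaling acts on the first integrals of Theorem~\ref{thm:integrais-reconstrucao} by $(\J_2,\J_1)\mapsto(e^{a+b}\J_2,\,e^{b-a}\J_1)$, and this action is transitive on $(0,\infty)^2$.

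\emph{Main obstacle.} The only delicate point is bookkeeping rather than any conceptual difficulty. The plan is to verify, case by case, that the constant factors coming from $e^{\mp\theta/2}$ evaluated at the chosen $t$-origin are purely multiplicative, so that they can be absorbed into $e^a$ and $e^b$. I would also check that in the $\coth$ and supercritical branches the lifted $\theta$ introduces no piecewise constant that could spoil this. For the $\coth$ branch the lift is continuous, so the constant is global. In the supercritical case the statement is restricted to intervals where $\tan(\omega t/2)$ is finite, so the absorption is valid there.
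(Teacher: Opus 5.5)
Your proposal is correct and follows the paper's own argument. The paper also deduces the corollary from Theorem~\ref{mainThm} by observing that $\psi_{a,b}$ is an isometry preserving the double horocyclic invariance and the mean curvature, and that it rescales the generating curve to $(e^ay,e^bw)$, so that $a,b$ can be chosen to absorb $A$ and $B$. Your version is slightly more careful on three points: you check that $\psi_{a,b}$ preserves the frame, $\theta$ and $N$; you treat the constant-$\theta$ case explicitly; and you note that in cases (i) and (iii) the prefactors are normalized to $e^{\pm\pi/4}$ and $1$ rather than literally by setting $A=B=1$.
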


\begin{figure}[ht]
\centering
\includegraphics[width=0.92\textwidth]{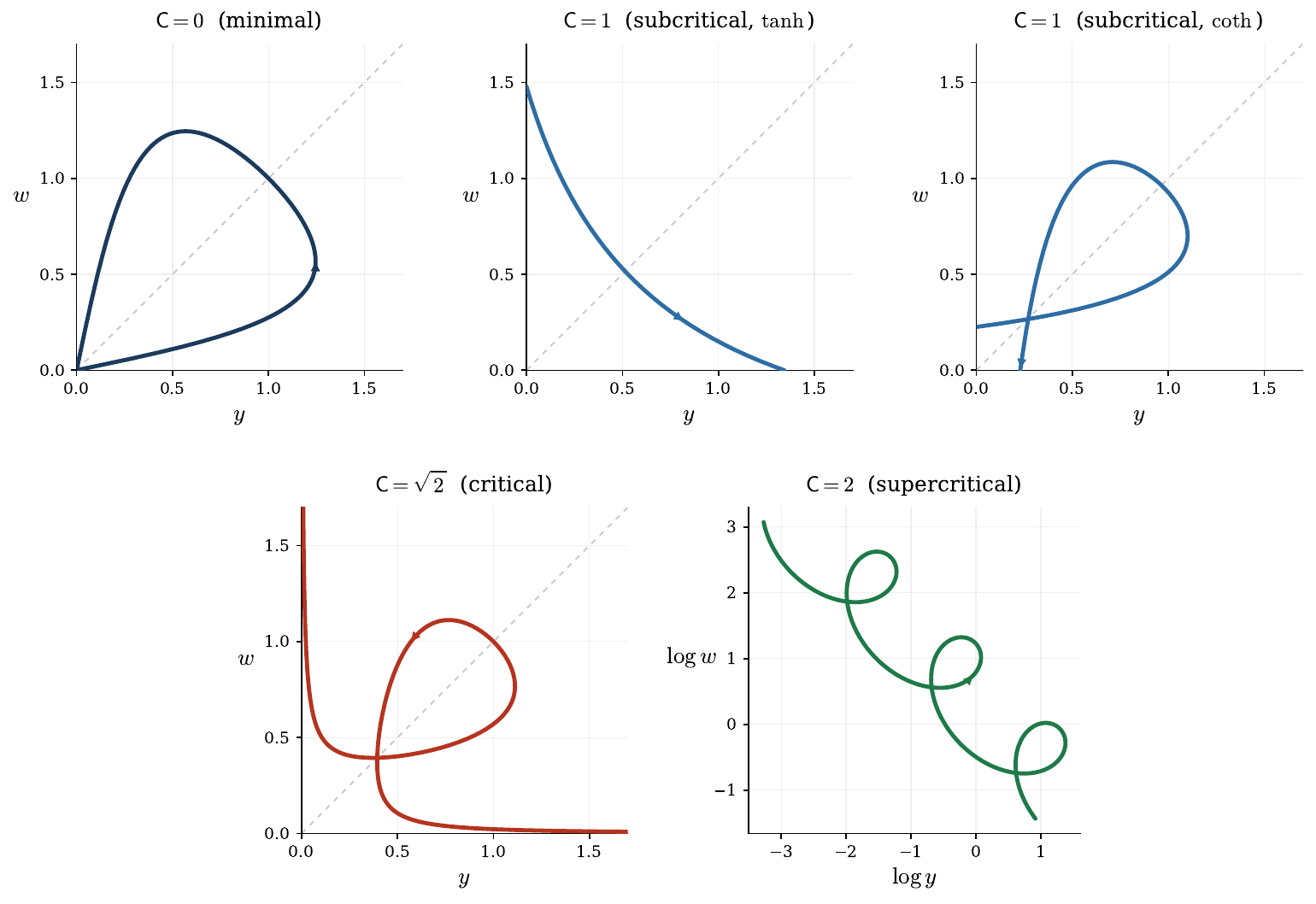}
\caption{Generating curves $(y,w)$ for representative values of $\C$, reconstructed from $\theta(t)$ by the universal formula~\eqref{eq:yw-univ} and shown up to an ambient dilation $\psi_{a,b}$; arrows indicate increasing $t$. In the subcritical regime, illustrated for $\C=1$, the bounded ($\tanh$) branch is a simple arc, whereas the unbounded ($\coth$) branch is a loop. The critical branch $\C=\sqrt{2}$ has a self-intersection. In the minimal case, the normalized generating curve is symmetric under the interchange $(y,w)\leftrightarrow(w,y)$ and both ends approach the origin. The supercritical curve is shown in logarithmic coordinates $(\log y,\log w)$, where the angular winding produces a spiral.}
\label{fig:curvas}
\end{figure}

\section{Equilibrium solutions and homogeneous geometries}
\label{sec:equilibrios}

The equilibrium solutions of the angular equation, that is, the constant
solutions $\theta(t)\equiv\theta_\C$, play a special role in the family.
Dynamically, they are the simplest solutions; geometrically, however, they
give rise to the natural homogeneous models associated with the horocyclic
reduction. Indeed, when $\theta$ is constant, \eqref{eq:equilsol} shows that
the functions $y$ and $w$ are exponential in $t$. Consequently, the induced
metric takes the form of a left-invariant metric on a three-dimensional
semidirect product Lie group.

In this section, we classify these equilibria and identify the corresponding
homogeneous geometries. The result gives an explicit dictionary between the
values of the parameter $\C$, the regimes of the angular equation, and the
three-dimensional geometries appearing in the family. Besides the special
models $\Hyp^3(-1/2)$, $\Hyp^2(-1)\times\R$, and ${\mathrm{Sol}}_3$, the generic case is described by a family of diagonal semidirect product Lie groups
\[
G_{c,s}=\R^2\rtimes_{\varphi}\R,
\qquad
\varphi_t=\mathrm{diag}(e^{ct},e^{st}),
\]
equivalently,
\[
(u,v,t)\cdot(\bar u,\bar v,\bar t)
=
\bigl(u+e^{ct}\bar u,\,
v+e^{st}\bar v,\,
t+\bar t\bigr).
\]
Here $c=\cos\theta_\C$ and $s=\sin\theta_\C$. The special choices of
$(c,s)$ recover the isotropic hyperbolic case, the product case, and the
Sol case; all remaining choices give anisotropic left-invariant
metrics.

\subsection{Equilibrium metrics}
\label{subsec:metricas-equilibrio}

For $\C\in[0,\sqrt{2}]$, let
$\theta_\C^+$ denote the unique solution to~\eqref{eq:mestra} in
$[-\pi/4,\pi/4]$ and 
$\theta_\C^- = -\theta^+_\C-\pi/2\in [-3\pi/4,-\pi/4]$. Let
$\theta_\C$ be either $\theta_\C^+$ or $\theta_\C^-$, and we write
\begin{align*}
c=\cos\theta_\C, \qquad s=\sin\theta_\C.
\end{align*}
In particular, $\C=c-s$ and we have that the generating curves to 
a hypersurface $\S$ as in Proposition~\ref{propEqSol} are
\begin{equation}\label{eq:eq-yw}
y(t)=y_0e^{ct}, \qquad w(t)=w_0e^{st}, \qquad y_0,w_0>0.
\end{equation}
After a hyperbolic dilation in each factor, as in 
Corollary~\ref{corMainthm}, we may assume that
$y_0=w_0=1$. Thus, the metric~\eqref{eq:metrica-induzida} of $\S$
becomes
\begin{equation}\label{eq:metrica-equilibrio}
\boxed{\;
ds_\Sigma^2
=e^{-2ct}\,du^2+e^{-2st}\,dv^2+dt^2.
\;}
\end{equation}

This metric is homogeneous. Indeed, consider the semidirect product Lie group
\[
G_{c,s}=\R^2\rtimes_{\varphi}\R,
\qquad
\varphi_t=\mathrm{diag}(e^{ct},e^{st}),
\]
with group law
\begin{align*}
(u,v,t)\cdot(\bar u,\bar v,\bar t)
=
\bigl(u+e^{ct}\bar u,\, v+e^{st}\bar v,\, t+\bar t\bigr).
\end{align*}
Then the vector fields
\begin{align*}
e_1=e^{ct}\partial_u, \qquad e_2=e^{st}\partial_v, \qquad e_3=\partial_t
\end{align*}
are left-invariant and orthonormal for the metric \eqref{eq:metrica-equilibrio}. 
Therefore, \eqref{eq:metrica-equilibrio} is a left-invariant metric on $G_{c,s}$.

\subsection{Identification of the homogeneous models}
\label{subsec:modelos-homogeneos}

\begin{theorem}[Homogeneous equilibrium models]
\label{thm:modelos-homogeneos}
The equilibrium metric \eqref{eq:metrica-equilibrio} belongs exactly to 
one of the classes below.

\begin{enumerate}[label=\textup{(\alph*)},leftmargin=*]

\item {\bf Isotropic case.}
If $c=s=\pm1/\sqrt2$, then $\C=0$ and
\begin{align*}
\Sigma\cong \Hyp^3(-1/2).
\end{align*}

\item {\bf Product case.}
If $s=0$ and $c=\pm1$, then $\C=c=\pm1$ and
\begin{align*}
\Sigma\cong\Hyp^2(-1)\times\R.
\end{align*}
Analogously, if $c=0$ and $s=\pm1$, then $\C=-s=\mp1$ and
\begin{align*}
\Sigma\cong\R\times\Hyp^2(-1).
\end{align*}

\item {\bf Sol case.}
If $c=-s=\pm1/\sqrt2$, then $|\C|=\sqrt2$ and $\Sigma$ is a left-invariant 
metric of the $\mathrm{Sol}_3$ geometry.

\item {\bf Anisotropic case.}
In the remaining cases, that is, when
\begin{align*}
cs\neq0 \qquad\text{and}\qquad c\neq\pm s,
\end{align*}
the metric is a left-invariant metric on the semidirect product Lie group $G_{c,s}$.

\end{enumerate}
\end{theorem}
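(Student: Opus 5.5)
The plan is to treat the statement as a case analysis on the unit vector $(c,s)=(\cos\theta_\C,\sin\theta_\C)$. The relation $\C=c-s$ comes from $\F(\theta_\C)=0$, that is $\sin\theta_\C-\cos\theta_\C+\C=0$. The four classes (a)--(d) are disjoint and exhaust all possibilities, because the conditions $c=s$, $c=-s$, $cs=0$ partition the degenerate points of the circle $c^2+s^2=1$, and (d) is by definition the complement. So "exactly one" is immediate, and the real content is the identification of the metric \eqref{eq:metrica-equilibrio} in cases (a)--(c). For each case I will substitute the specific $(c,s)$ into $e^{-2ct}du^2+e^{-2st}dv^2+dt^2$ and compute $\C=c-s$.

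\textbf{Case (a).} Here $c=s=\pm1/\sqrt2$, so the metric is $e^{\mp\sqrt2 t}(du^2+dv^2)+dt^2$. Changing $t\mapsto -t$ if necessary, I write it as $dt^2+e^{-2\kappa t}(du^2+dv^2)$ with $\kappa=1/\sqrt2$. This is the standard horospherical warped-product form of hyperbolic space of curvature $-\kappa^2=-1/2$. To make the identification explicit, I set $z=e^{\kappa t}/\kappa$. Then $dz^2/(\kappa^2z^2)=dt^2$, and the metric becomes $\frac{1}{\kappa^2 z^2}(du^2+dv^2+dz^2)$, which is the upper half-space model of $\Hyp^3(-1/2)$.

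\textbf{Case (b).} With $s=0$ and $c=\pm1$, the metric is $e^{\mp2t}du^2+dt^2+dv^2$. Substituting $z=e^{\pm t}$ turns the $(u,t)$ part into $(du^2+dz^2)/z^2$, and $dv^2$ splits off as a flat factor, giving $\Hyp^2(-1)\times\R$. The case $c=0$ is symmetric, with the roles of $u$ and $v$ exchanged.

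\textbf{Case (c).} With $c=-s=\pm1/\sqrt2$, the metric is $e^{\mp\sqrt2 t}du^2+e^{\pm\sqrt2 t}dv^2+dt^2$. Rescaling $t=\sqrt2\,\tau$ gives $2\bigl(e^{\mp2\tau}du'^2+e^{\pm2\tau}dv'^2+d\tau^2\bigr)$ after absorbing constants into $u$ and $v$. This is a homothetic copy of the standard Sol metric $e^{2\tau}dx^2+e^{-2\tau}dy^2+d\tau^2$, which is the Sol$_3$ geometry; swapping $u$ and $v$ handles the sign. Equivalently, $G_{c,-c}$ is the group $\R^2\rtimes\R$ with action $\mathrm{diag}(e^{ct},e^{-ct})$, i.e.\ Sol$_3$ itself.

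\textbf{Case (d).} This follows directly from the left-invariance already established in Subsection~\ref{subsec:metricas-equilibrio}.

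\textbf{Compatibility with the paper's parameter range.} I will also note which values actually occur for $\C\ge0$. With $\theta^+_\C\in[-\pi/4,\pi/4]$ and $\theta^-_\C\in[-3\pi/4,-\pi/4]$, the case $\C=0$ gives $\theta^+=\pi/4$, i.e.\ $(a)$ with the plus sign, and $\theta^-=-3\pi/4$, i.e.\ the minus sign. The value $\C=1$ gives $\theta^+=0$ and $\theta^-=-\pi/2$, which are the two product cases. The value $\C=\sqrt2$ gives $-\pi/4$, the Sol case. Negative values of $\C$ are covered by Remark~\ref{rmk:simetria}.

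The main obstacle is purely a matter of bookkeeping: matching the curvature normalization in (a) and recognizing Sol up to homothety in (c). The change of variables $z=e^{\kappa t}/\kappa$ settles (a), and the rescaling $t=\sqrt2\,\tau$ settles (c).
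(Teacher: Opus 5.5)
Your proposal is correct and takes essentially the paper's route: you substitute each $(c,s)$ into $e^{-2ct}du^2+e^{-2st}dv^2+dt^2$, use an exponential change of the $t$-variable to reach the upper half-space and half-plane models in (a) and (b), pass by a homothety to the standard diagonal Sol metric in (c), and invoke the previously established left-invariance for (d). The differences are cosmetic: your substitution $z=e^{\kappa t}/\kappa$ produces the factor $1/\kappa^2=2$ directly, where the paper uses $r=e^{at}$ and then rescales $u,v$ by $1/\sqrt2$, and you spell out why the four parameter classes are disjoint, which the paper only asserts.
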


\begin{proof}
We start from the equilibrium metric
\begin{align*}
ds_\Sigma^2 = e^{-2ct}du^2+e^{-2st}dv^2+dt^2.
\end{align*}

If $c=s=a=\pm1/\sqrt2$, then
\begin{align*}
ds_\Sigma^2=e^{-2at}(du^2+dv^2)+dt^2.
\end{align*}
Taking $r=e^{at}$, we have $dt=dr/(ar)$ and, since $a^2=1/2$,
\begin{align*}
ds_\Sigma^2 = \frac{du^2+dv^2+2\,dr^2}{r^2}.
\end{align*}
With the change of variables $U=u/\sqrt2$, $V=v/\sqrt2$, we obtain
\begin{align*}
ds_\Sigma^2 = 2\,\frac{dU^2+dV^2+dr^2}{r^2}.
\end{align*}
Therefore, the metric is twice the standard hyperbolic metric in the upper 
half-space, and has constant sectional curvature $-1/2$. Thus
\begin{align*}
\Sigma\cong\Hyp^3(-1/2).
\end{align*}

If $s=0$ and $c=\pm1$, then
\begin{align*}
ds_\Sigma^2=e^{-2ct}du^2+dt^2+dv^2.
\end{align*}
Taking $r=e^{ct}$, we obtain
\begin{align*}
e^{-2ct}du^2+dt^2 = \frac{du^2+dr^2}{r^2},
\end{align*}
which is the metric of the hyperbolic half-plane of curvature $-1$. The 
$v$ direction is Euclidean, and therefore
\begin{align*}
\Sigma\cong\Hyp^2(-1)\times\R.
\end{align*}
The case $c=0$ and $s=\pm1$ is analogous, swapping the roles of $u$ and $v$.

If $c=-s=a=\pm1/\sqrt2$, then
\begin{align*}
ds_\Sigma^2 = e^{-2at}du^2+e^{2at}dv^2+dt^2.
\end{align*}
This is the standard diagonal form of a left-invariant metric of the 
$\mathrm{Sol}_3$ geometry, up to normalization of the vertical parameter.

Finally, in the remaining cases, the metric has already been identified as a 
left-invariant metric on the semidirect product Lie group $G_{c,s}$. The exclusions 
$cs\neq0$ and $c\neq\pm s$ precisely remove the hyperbolic, product, and 
Sol cases. Thus, the anisotropic case remains.
\end{proof}

\begin{table}[ht]
\centering
\renewcommand{\arraystretch}{1.65}
\setlength{\tabcolsep}{8pt}
\begin{tabular}{@{}cccll@{}}
\toprule
$\theta_\C$ & $H$ & $(c,s)$ & Regime & Geometry of $\Sigma$ \\
\midrule

$\dfrac{\pi}{4}$
&
$0$
&
$\left(\dfrac{1}{\sqrt2},\dfrac{1}{\sqrt2}\right)$
&
minimal
&
$\Hyp^3\!\left(-\dfrac12\right)$
\\
\addlinespace[4pt]

$\dfrac{5\pi}{4}$
&
$0$
&
$\left(-\dfrac{1}{\sqrt2},-\dfrac{1}{\sqrt2}\right)$
&
minimal
&
$\Hyp^3\!\left(-\dfrac12\right)$
\\

\midrule

$0$
&
$\dfrac{1}{3}$
&
$(1,0)$
&
subcritical
&
$\Hyp^2(-1)\times\R$
\\
\addlinespace[4pt]

$\pi$
&
$-\dfrac{1}{3}$
&
$(-1,0)$
&
subcritical
&
$\Hyp^2(-1)\times\R$
\\
\addlinespace[4pt]

$\dfrac{\pi}{2}$
&
$-\dfrac{1}{3}$
&
$(0,1)$
&
subcritical
&
$\Hyp^2(-1)\times\R$
\\
\addlinespace[4pt]

$\dfrac{3\pi}{2}$
&
$\dfrac{1}{3}$
&
$(0,-1)$
&
subcritical
&
$\Hyp^2(-1)\times\R$
\\

\midrule

$\dfrac{3\pi}{4}$
&
$-\dfrac{\sqrt2}{3}$
&
$\left(-\dfrac{1}{\sqrt2},\dfrac{1}{\sqrt2}\right)$
&
critical
&
$\mathrm{Sol}_3$
\\
\addlinespace[4pt]

$\dfrac{7\pi}{4}$
&
$\dfrac{\sqrt2}{3}$
&
$\left(\dfrac{1}{\sqrt2},-\dfrac{1}{\sqrt2}\right)$
&
critical
&
$\mathrm{Sol}_3$
\\

\midrule

otherwise
&
$\dfrac{c-s}{3}$
&
$(c,s)$
&
subcritical
&
$G_{c,s}$
\\

\bottomrule
\end{tabular}
\caption{Equilibrium solutions of the angular equation. Here
$c=\cos\theta_\C$, $s=\sin\theta_\C$, and
$H=(c-s)/3$ is the signed mean curvature with respect to the chosen normal.
The last row represents all remaining equilibrium values of $\theta_\C$.}
\label{tab:equilibria}
\end{table}

\end{document}